\newtheorem{theorem}{Theorem}[section]
\newtheorem{proposition}[theorem]{Proposition}
\newtheorem{lemma}[theorem]{Lemma}
\theoremstyle{definition}
\theoremstyle{remark}
\newcommand\N{\mathbf{N}}
\newcommand\R{\mathbf{R}}   
\renewcommand\Re{\mathrm{Re\,}}
\newcommand\medio{{\textstyle \frac{1}{2}}\,}
\newfont{\cmbsy}{cmbsy10}
\newfont{\cmmib}{cmmib10}
\begin{document}

\title{A glimpse inside the mathematical kitchen}

\author{Juan Arias de Reyna}
\address{Facultad de Matemáticas,
Universidad de Sevilla,
 Apdo.\-~1160, 41080-Sevilla, Spain.}
\email{arias@us.es}

\author{Jan van de Lune}
\address{Langebuorren 49, 9074 CH Hallum\\
The Netherlands (formerly at CWI, Amsterdam)}
\email{j.vandelune@hccnet.nl}

\thanks{The first author is supported by  grant
MTM2009-08934.}

\subjclass[2010]{Primary  26D05,  26D15; Secondary 42A05}

\begin{abstract}
We prove the inequality
\begin{displaymath}
\sum_{k=1}^\infty (-1)^{k+1}\frac{r^k\cos k\phi}{k+2}
                < \sum_{k=1}^\infty(-1)^{k+1}\frac{ r^k}{k+2}
\end{displaymath}
for $0 < r \le 1$ and  $0 < \phi < \pi$.
 
For the case  $r = 1$  we give two proofs.
The first one is by means of a general numerical technique 
( maximal slope principle ) for proving inequalities between 
elementary functions. The second proof is fully analytical.
Finally we prove a general rearrangement theorem and apply it to the
remaining case $0 < r < 1$.
 
Some of these inequalities are needed for obtaining general sharp 
bounds for the errors committed when applying the Riemann-Siegel 
expansion of Riemann's zeta function.
\end{abstract}

\date{\today}

\maketitle


\section{The  problem to be dealt with in this note.}

The main goal of this note is to prove that for $0<r\le 1$ and
$0<\varphi<\pi$
\begin{equation}\label{main}
\frac{r\cos\varphi}{3}-\frac{r^2\cos2\varphi}{4}+\frac{r^3\cos3\varphi}{5}-+\cdots<
\frac{r}{3}-\frac{r^2}{4}+\frac{r^3}{5}-+\cdots
\end{equation}
We soon recognized that this is not a trivial problem, and
still hold that view.

\section{Motivation.}

In one of our studies \cite{A} of the  error(s), inherent in
using the Riemann-Siegel formula for the Riemann $\zeta$
function ( see, for example, Edwards \cite{E} or Gabcke
\cite{G} ), we encountered the following problem: Find a sharp
bound of the integral
\begin{equation}\label{intmotivation}
\int_C(1-z)^{-\sigma}e^{-x^2
f(z)}\frac{dz}{z^{k+1}}\quad\text{where}\quad
f(z):=-\frac{\log(1-z)}{z^2}-\frac{1}{z}-\frac{1}{2}.
\end{equation}
Here $k$ is a natural number, $\sigma$ and $x$ denote arbitrary real
numbers, $C$ is a simple circular contour around $z=0$ with radius
$r\in(0,1]$, and $\log(1-z)$ is the principal logarithm:
$\log(1-z):=-\sum_{k=1}^\infty \frac{z^k}{k}$ for $|z|\le 1$,
$z\ne1$.

The usual technical paper proceeds, as directly as possible, to the
final result. However, it occurred to us that an interested reader
might appreciate a glimpse inside the mathematical kitchen. To this
end, our note will provide the reader a detailed summary of the
struggles we encountered along the way to our final solution.

\section{Reduction of the problem.}

We soon recognized that our problem concerning the integral in
\eqref{intmotivation} may be reduced to finding a suitable sharp
upper bound of $- \Re f(z)$ for $| z | = r$,  i.~e., a suitable
sharp upper bound of $- \Re f(re^{i\varphi})$ for
$-\pi<\varphi<\pi$.

It is easily seen that $\Re f(re^{i\varphi})$ is an even function of
$\varphi$, so that we may restrict ourselves to  $0 \le
\varphi<\pi$. The reader may know that in such cases we have a habit
of first making a Plot ($\,$using Mathematica$\,$) of the function(s) in
question.

\begin{figure}[H]
  \includegraphics[width=8cm]{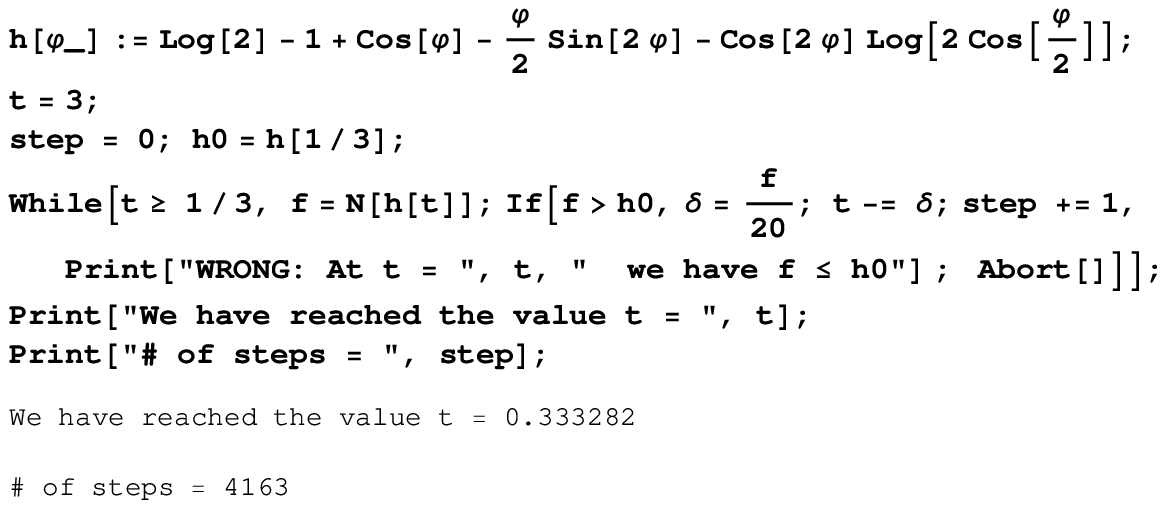}
  \includegraphics[width=6.8cm]{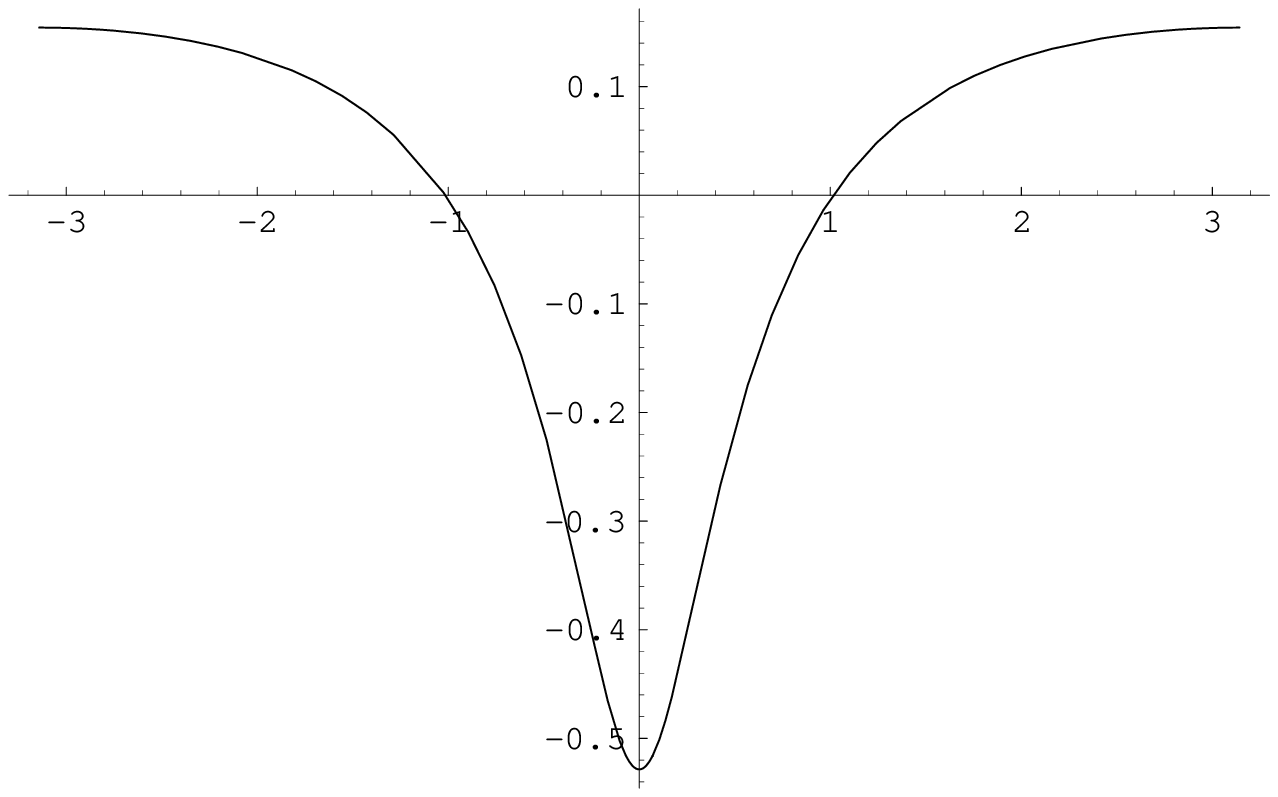}\\
  \label{F:Xray}
\end{figure}

After having made various such plots of $- \Re f(re^{i\varphi})$  we
decided to be satisfied with showing that  $- \Re f(re^{i\varphi})$
is maximal for $\varphi=\pi$ or, equivalently, that
\begin{equation}\label{first}
-\Re f(re^{i\varphi})<-\Re f(re^{\pi i})=-\Re f(-r)\quad\text{for
all}\quad 0<\varphi<\pi.
\end{equation}

( Although in \cite{A} this inequality was actually needed only for
$r=1$, $r=8/9$ and $r=0.883$, we are striving for some generality
here. )

Using the power series expansion of $\log(1-z)$ we may write
\eqref{first} as \eqref{main}. As said before, proving inequality
\eqref{main} will be our main goal in this note. (~There are no
serious convergence problems in \eqref{main}.~)

\section{Application of the Maximal Slope Principle.}

Suppose we have a differentiable real function  $h(x)$  on an
interval $[a , b]$  with  $| h'(x) | \le M$  for all  $x \in[a
, b]$. ( Here we assume  $M > 0$, because otherwise we are not
dealing with a serious problem. ) As a simple application of
the Mean Value Theorem, the Maximal Slope Principle ( MSP ) now
asserts the following : If, for example, $h(b)> 0$  then $h(x)$
is also positive for all $x \in(x_1, b)$ where $x_1 := \max ( a
, b - \frac{h(b)}{M})$. ( Just draw a picture$\;$! )

Note that if $x_1>a$ and $h(x_1)>0$ we may repeat this procedure (
until we reach an $x_1\le a$ ).

\subsection{Some Kitchen Prep Work.}
Of the many useful applications of the MSP we briefly mention a few
examples:
\begin{itemize}

\item{} Flett's function $F(t):=\sum_{n=1}^\infty
    \frac{\sin(t/n)}{n}$ has no zeroes in the $t$-interval
    $(0,48)$. The first zero is found at $t=48.418454\dots$

\item{} For all $n\in[2,10]$ the function
$Q(x):=\frac{1^x+2^x+3^x+\cdots+n^x+(n+1)^x}{1^x+2^x+3^x+\cdots+n^x}$
is log-convex  \text{( in $x$ )} on the entire real line $\R$.

(To this we might add our conjecture that $Q(x)$  is
log-convex \text{( in $x$ )} on $\R$ for all $n\in\N$. )

\item{} By means of the MSP one may prove ( or disprove )
    excruciatingly complicated inequalities $L < R$ where
    $L$ and $R$ are exponential polynomials.

\item{} The MSP may also be used to locate zeroes of real
    functions such as, for example,
    $R_{23}(t):=\sum_{n=1}^{23}\frac{\cos(t\log n)}{n}$.
\end{itemize}

\subsection{Application of the MSP method.}

Following in the footsteps of Hilbert and Pólya, we apply the MSP to
the function $ -\Re f(re^{i\varphi})$ for the simplest case  $r=1$.
In \cite[pp. 126-127.]{FJ} we read : 'Courant describes Hilbert's
method of dealing with problems as follows: \emph{He was a most
concrete, intuitive mathematician who invented, and very consciously
used, a principle: namely, if you want to solve a problem first
strip the problem of everything that is not essential. Simplify it,
specialize it as much as you can without sacrificing its core. Thus
it becomes simple, as simple as it can be made, without losing any
of its punch, and then you solve it. The generalization is a
triviality, which you do not need to pay too much attention to. This
principle of Hilbert's proved extremely useful for him and also for
others who learned it from him; unfortunately it has been
forgotten.}'.

In the present case ( $r=1$ )  we thus have to show that  $ - \Re
f(e^{i\varphi}) < - \Re f(-1)$ for $0 < \varphi <\pi$. It is clear
that in this inequality we may replace $\varphi$  by  $\pi-\varphi$,
so that we may just as well prove that

\begin{equation}
-\Re f(-e^{-i\varphi})<-\Re f(-1)\qquad \text{for } 0<\varphi<\pi.
\end{equation}

Writing
\begin{equation}\label{defu}
u(\varphi):=-\Re f(-e^{-i\varphi})=\Re\left(e^{2i\varphi
}\log(1+e^{-i\varphi })-e^{i\varphi }+\medio\right)
\end{equation}
we may also write our inequality  as $u(\varphi)<u(0)$.

We have ( for $-\pi<\varphi<\pi$ )
\begin{align*}
u(\varphi)&=\Re\Bigl[(e^{2i\varphi}\log\bigl(e^{-i\varphi/2}(e^{i\varphi/2}+e^{-i\varphi/2})\bigr)
-e^{i\varphi}+\frac12\Bigr])\\
&=\Re\Bigl[(\cos2\varphi+i\sin2\varphi)\Bigl(
\log\Bigl(2\cos\frac{\varphi}{2}\Bigr)-\frac{i\varphi}{2}\Bigr)
-e^{i\varphi}+\frac12\Bigr]\cr &=
\cos(2\varphi)\log\Bigl(2\cos\frac{\varphi}{2}\Bigr)+
\frac{\varphi}{2}\sin(2\varphi)-\cos\varphi+\frac12.
\end{align*}

Now we define
\begin{align}
h(\varphi):&=u(0)-u(\varphi)=\label{defh}\\
&=\log2-1+\cos\varphi-\frac{\varphi}{2}\sin(2\varphi)
-\cos(2\varphi)\log\Bigl(2\cos\frac{\varphi}{2}\Bigr).\label{formulah}
\end{align}
We have just seen that we have to show that $h(\varphi)>0$ for
$0<\varphi<\pi$.

Before applying the MSP to $h(\varphi)$ we first show that
$h(\varphi)$ is positive on the intervals
$0<\varphi\le\frac{1}{3}$ and $3\le \varphi<\pi$.

\begin{lemma}
$h(\varphi)>0$ for $0<\varphi\le\frac{1}{3}$.
\end{lemma}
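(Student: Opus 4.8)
The plan is to prove $h(\varphi)>0$ on $(0,\tfrac13]$ by showing the function is positive at the right endpoint and controlling its behavior near $0$, where $h(0)=0$. Since $h(0)=\log 2 - 1 + 1 - 0 - 1\cdot\log 2 = 0$, the inequality is an equality at the origin, so the real content is that $h$ increases (at least initially) as $\varphi$ moves away from $0$. The natural strategy is therefore a Taylor/power-series analysis of $h$ near $\varphi=0$: expand each term in \eqref{formulah} as a power series in $\varphi$ and check that the lowest-order nonvanishing term is positive.

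\medskip

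First I would compute the derivative $h'(\varphi)$ from \eqref{formulah}. Differentiating gives
\[
h'(\varphi)=-\sin\varphi-\tfrac12\sin(2\varphi)-\varphi\cos(2\varphi)
+2\sin(2\varphi)\log\!\Bigl(2\cos\tfrac{\varphi}{2}\Bigr)
+\cos(2\varphi)\cdot\tfrac12\tan\tfrac{\varphi}{2},
\]
where the last term comes from $\frac{d}{d\varphi}\log(2\cos\tfrac{\varphi}{2})=-\tfrac12\tan\tfrac{\varphi}{2}$. One checks $h'(0)=0$, so again the leading behavior is controlled by higher derivatives. Rather than wrestle with $h'$ directly, I would expand $h(\varphi)$ itself into a Maclaurin series. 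Using $\cos\varphi=1-\tfrac{\varphi^2}{2}+\cdots$, $\cos(2\varphi)=1-2\varphi^2+\cdots$, $\sin(2\varphi)=2\varphi-\tfrac{4\varphi^3}{3}+\cdots$, and $\log(2\cos\tfrac{\varphi}{2})=\log 2-\tfrac{\varphi^2}{8}+\cdots$, the constant and $\varphi^1$ terms cancel (consistent with $h(0)=h'(0)=0$), and I expect the leading term to be a positive multiple of $\varphi^2$. Identifying that coefficient and bounding the tail of the series on the short interval $(0,\tfrac13]$ then yields positivity.

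\medskip

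The main obstacle will be making the tail estimate rigorous rather than merely formal: after isolating the positive leading term $c\,\varphi^2$, I must show the remaining higher-order contributions cannot overturn it on all of $(0,\tfrac13]$. The cleanest route is probably to establish that $h''(\varphi)>0$ on the whole interval, since combined with $h(0)=h'(0)=0$ this forces $h$ to be strictly positive for $\varphi>0$ by two applications of the Mean Value Theorem. Proving $h''>0$ reduces to showing an explicit trigonometric-logarithmic expression is positive on a very short interval, which should follow from elementary bounds such as $\log(2\cos\tfrac{\varphi}{2})\ge \log 2 - c\varphi^2$ and crude estimates $\cos(2\varphi)>0$, $\sin(2\varphi)>0$ valid well beyond $\varphi=\tfrac13$. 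Because the interval $(0,\tfrac13]$ is so small and safely away from the singularity of $\log\cos\tfrac{\varphi}{2}$ at $\varphi=\pi$, these crude bounds should have ample room to close the argument.
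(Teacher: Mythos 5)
Your overall direction (expand $h$ at the origin and identify a positive leading term, or alternatively prove $h''>0$ and use $h(0)=h'(0)=0$) is the right kind of idea --- the paper's proof of this lemma is in effect a quantitative version of your series plan, and your convexity fallback is exactly what the paper does later, in Section 5, for its second proof of the case $r=1$. But there is a genuine gap: your assessment that ``crude estimates $\cos(2\varphi)>0$, $\sin(2\varphi)>0$ \dots should have ample room'' is false, and this is precisely where the difficulty of the lemma lives. The leading Taylor coefficient of $h$ is
\[
\tfrac12 h''(0)\,\varphi^2=2\bigl(\log 2-\tfrac{11}{16}\bigr)\varphi^2\approx 0.0113\,\varphi^2,
\]
a near-cancellation between $\log 2\approx 0.693147$ and $\tfrac{11}{16}=0.6875$. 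Consequently $h(\tfrac13)\approx 0.0013$, while the individual terms in the formula for $h$ (and for $h''$) are of size $1$ to $3$: any lower bound must therefore be accurate to roughly one part in a thousand in absolute terms, i.e.\ you must capture the $\varphi^2$ coefficient essentially exactly (including the $\log 2$ contribution) and push all error terms to order $\varphi^4$ with explicit signed control. Crude one-sided bounds lose the entire margin many times over. The same objection applies to your fallback: $h''(0)=4\log 2-\tfrac{11}{4}\approx 0.0226$, so proving $h''>0$ on $(0,\tfrac13]$ faces the identical razor-thin margin; in the paper, convexity of $h$ is only obtained with substantial machinery (a differential equation for $u$, explicit coefficients $d_k$ in terms of Bernoulli numbers, and the rational bounds $\tfrac{177}{256}<\log 2<\tfrac{89}{128}$ to settle the finitely many borderline cases).

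What the paper actually does for this lemma is the carefully executed version of your first plan: it uses the fourth-order two-sided bounds $1-\tfrac{t^2}{2}<\cos t<1-\tfrac{t^2}{2}+\tfrac{t^4}{24}$ and $\sin t<t-\tfrac{t^3}{6}+\tfrac{t^5}{120}$, together with $\tfrac{1}{1-x}>1+x$ and $\log(1+x)>\tfrac{x}{1+x/2}$, to trap $h(\varphi)$ from below by an explicit rational function $\varphi^2\,p(\varphi)/(23040+1440\varphi^2-30\varphi^4)$, and then checks that the polynomial $p$ has its smallest positive root at $0.392976\ldots$ --- only barely beyond $\tfrac13$. That near-miss is the telltale sign of how little slack exists. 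Your proposal becomes a proof only after ``crude estimates'' are replaced by expansions of at least this order with explicit error control, and after you verify that the surviving margin of about $0.0113\,\varphi^2$ is not consumed by those errors on all of $(0,\tfrac13]$.
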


\begin{proof}
We will use the elementary inequalities
\begin{displaymath}
1-\frac{t^2}{2!}<\cos
t<1-\frac{t^2}{2!}+\frac{t^4}{4!}\quad\text{and}\quad \sin t <
t -\frac{t^3}{3!}+\frac{t^5}{5!}.
\end{displaymath}
Then, with $x=\frac{\varphi^2}{8}-\frac{\varphi^4}{384}$ we have
$\cos\frac{\varphi}{2}<1-x$ so that
\begin{equation}
\frac{1}{\cos\frac{\varphi}{2}}>\frac{1}{1-x}>1+x
\end{equation}
and
\begin{align*}
h(\varphi)&=\log2-1+\cos\varphi-\frac{\varphi}{2}\sin(2\varphi)
-\cos(2\varphi)\log\Bigl(2\cos\frac{\varphi}{2}\Bigr)>\\
&>\Bigl(2\varphi^2-\frac{2}{3}\varphi^4\Bigr)\log2+(1-2\varphi^2)\log(1+x)-
\frac{\varphi}{2}\Bigl(2\varphi-\frac{(2\varphi)^3}{3!}+\frac{(2\varphi)^5}{5!}\Bigr)
-\frac12 \varphi^2.
\end{align*}
Now we use $\log(1+x)>\frac{x}{1+x/2}$ and simplify, yielding
\begin{equation}
h(\varphi)>\frac{\varphi^2 p(\varphi)}
{23040+1440\varphi^2-30\varphi^4}
\end{equation}
where $p(\varphi)$ is the polynomial
\begin{multline}
p(\varphi):=4\varphi^{8}+(20\log2-212)\varphi^6
-(1020\log2+1947)\varphi^4+\\+(7380-12480\log2)\varphi^2+(46080\log2-31680)
\end{multline}

The real roots of $p(\varphi)$ are $\pm\, 0.392976\dots$ and $\pm\,
7.78294\dots$, and $p(\varphi)$ is positive for
$0<\varphi<0.392976\dots$. The denominator  has only two real roots
at $\pm 7.78849\dots$.

So, $h(\varphi)>0$ for $0<\varphi<0.392976$, in particular for
$0<\varphi\le\frac13$.
\end{proof}

\begin{lemma}
$h(\varphi)>0$ for $3\le\varphi<\pi$.
\end{lemma}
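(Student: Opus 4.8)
The plan is to exploit the logarithmic blow-up at the right endpoint rather than to apply the MSP. As $\varphi\to\pi^-$ we have $\cos\frac{\varphi}{2}\to0^+$, so $\log\bigl(2\cos\frac{\varphi}{2}\bigr)\to-\infty$, while $\cos(2\varphi)\to\cos2\pi=1$; hence the term $-\cos(2\varphi)\log\bigl(2\cos\frac{\varphi}{2}\bigr)$ in \eqref{formulah} tends to $+\infty$ and dominates everything else. Since the derivative of $h$ is unbounded near $\pi$, the MSP is unsuitable here; instead I would bound $h$ below by a positive constant directly, by showing that this single ``singular'' term already outweighs all the remaining, bounded, terms.

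The first step is to pin down the signs and monotonicities of the individual factors on $[3,\pi)$. Here $2\varphi$ ranges over $[6,2\pi)$, and since $6>\tfrac{3}{2}\pi$ the cosine is positive and increasing there, so $\cos(2\varphi)\ge\cos6>0$; likewise $\sin(2\varphi)\le0$ on this range, whence $-\frac{\varphi}{2}\sin(2\varphi)\ge0$. At the same time $\frac{\varphi}{2}$ ranges over $[\tfrac32,\tfrac{\pi}{2})$, where $\tfrac32<\tfrac\pi2$ and the cosine is positive and decreasing, so $0<2\cos\frac{\varphi}{2}\le 2\cos\tfrac32<1$ and therefore $\log\bigl(2\cos\frac{\varphi}{2}\bigr)\le\log\bigl(2\cos\tfrac32\bigr)<0$. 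The crucial consequence is that the singular term is a product of two positive quantities, each minimized at $\varphi=3$:
\begin{equation*}
-\cos(2\varphi)\log\Bigl(2\cos\frac{\varphi}{2}\Bigr)\ \ge\ -\cos6\,\log\Bigl(2\cos\tfrac32\Bigr)=0.96017\ldots\times1.95556\ldots>1.87 .
\end{equation*}

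The remaining terms are then easy to bound using $\cos\varphi\ge-1$ and $-\frac{\varphi}{2}\sin(2\varphi)\ge0$, giving
\begin{equation*}
\log2-1+\cos\varphi-\frac{\varphi}{2}\sin(2\varphi)\ \ge\ \log2-2>-1.31 .
\end{equation*}
Adding the two estimates yields $h(\varphi)>1.87-1.31>0$ throughout $[3,\pi)$, which is exactly the assertion of the lemma.

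The only real obstacle is psychological rather than computational: the singularity at $\varphi=\pi$ looks threatening, but it works \emph{for} us, since the blow-up term stays positive and bounded away from $0$ on all of $[3,\pi)$. Once one checks the two elementary facts $6>\tfrac32\pi$ and $\tfrac32<\tfrac{\pi}{2}$ — which fix the signs of $\cos(2\varphi)$, $\sin(2\varphi)$ and $\log\bigl(2\cos\frac{\varphi}{2}\bigr)$ on the interval — the proof collapses to the two one-line estimates above, with no need for the MSP or for any Taylor expansion.
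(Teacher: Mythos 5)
Your proof is correct and is essentially the paper's own argument: the paper likewise fixes the signs of $\cos(2\varphi)$, $\sin(2\varphi)$ and $\log\bigl(2\cos\frac{\varphi}{2}\bigr)$ on $[3,\pi)$, bounds $\cos\varphi>-1$ and $-\frac{\varphi}{2}\sin(2\varphi)>0$, and estimates the product term below by $-\cos 6\cdot\log\bigl(2\cos\frac32\bigr)$, arriving at the same lower bound $\log 2-2-\cos6\,\log\bigl(2\cos\frac32\bigr)\approx 0.57>0$. Your ``blow-up works for us'' narrative is just a different way of motivating the identical estimates.
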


\begin{proof}
For $3\le\varphi<\pi$ we have
\begin{gather*}
0<2\cos\frac{\varphi}{2}<2\cos\frac32,\quad
\log\Bigl(2\cos\frac\varphi2\Bigr)<\log\Bigl(2\cos\frac32\Bigr)<0,
\\
0<\cos6<\cos2\varphi<1,\quad \sin6<\sin2\varphi<0,\quad
-1<\cos\varphi<\cos3<0
\end{gather*}
so that
\begin{multline*}
h(\varphi)=\log2-1+\cos\varphi-\frac{\varphi}{2}\sin(2\varphi)
-\cos(2\varphi)\log\Bigl(2\cos\frac{\varphi}{2}\Bigr)>\\
>\log 2-1-1-\cos6\times \log\Bigl(2\cos\frac32\Bigr)\approx0.570891.
\end{multline*}
\end{proof}

Now we can apply the MSP to $h(\varphi)$ on the interval
$\frac{1}{3}\le \varphi\le 3$. First, we have to determine the
maximal slope of  $h(\varphi)$  on this interval.

\begin{lemma}
For $\frac{1}{3}\le\varphi\le3$ we have $|h'(\varphi)|<20 $.
\end{lemma}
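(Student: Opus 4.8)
The plan is to compute $h'(\varphi)$ explicitly and then bound it crudely by the triangle inequality, exploiting the fact that on $[\tfrac13,3]$ we stay safely away from $\varphi=\pi$, where the only genuinely dangerous terms blow up. Differentiating \eqref{formulah} and using the identity $\frac{d}{d\varphi}\log\bigl(2\cos\tfrac{\varphi}{2}\bigr)=-\tfrac12\tan\tfrac{\varphi}{2}$, I would first obtain
\begin{align*}
h'(\varphi)&=-\sin\varphi-\tfrac12\sin2\varphi-\varphi\cos2\varphi\\
&\qquad+2\sin2\varphi\,\log\Bigl(2\cos\tfrac{\varphi}{2}\Bigr)+\tfrac12\cos2\varphi\,\tan\tfrac{\varphi}{2}.
\end{align*}
Each of the first three summands is immediately controlled: $|\sin\varphi|\le1$, $\tfrac12|\sin2\varphi|\le\tfrac12$, and $\varphi|\cos2\varphi|\le3$ on the interval.

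The two remaining summands are where the estimate must be done with a little care, and they are the crux of the matter. Both $\tan\tfrac{\varphi}{2}$ and $\log\bigl(2\cos\tfrac{\varphi}{2}\bigr)$ are singular at $\varphi=\pi$, but for $\tfrac13\le\varphi\le3$ we have $\tfrac{\varphi}{2}\le\tfrac32<\tfrac{\pi}{2}$, so $\cos\tfrac{\varphi}{2}>0$ and both quantities remain finite. Since $\varphi\mapsto\tan\tfrac{\varphi}{2}$ is increasing, $\tan\tfrac{\varphi}{2}\le\tan\tfrac32<14.11$, whence $\tfrac12|\cos2\varphi|\tan\tfrac{\varphi}{2}<7.06$. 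Likewise $\varphi\mapsto\log\bigl(2\cos\tfrac{\varphi}{2}\bigr)$ is decreasing, so its absolute value on $[\tfrac13,3]$ is maximal at one of the endpoints; comparing $\bigl|\log(2\cos\tfrac16)\bigr|$ with $\bigl|\log(2\cos\tfrac32)\bigr|$ shows $\bigl|\log(2\cos\tfrac{\varphi}{2})\bigr|<1.96$, giving $2|\sin2\varphi|\,\bigl|\log(2\cos\tfrac{\varphi}{2})\bigr|<3.92$.

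Adding the five bounds yields $|h'(\varphi)|<1+\tfrac12+3+3.92+7.06<16<20$, which is the claim. I expect the main obstacle to be purely bookkeeping rather than conceptual: the constant $20$ is deliberately generous, so no sharp analysis of the oscillatory product terms is needed, and the only real point is to record that restricting to $[\tfrac13,3]$ keeps $\cos\tfrac{\varphi}{2}$ bounded away from $0$, so that the $\tan$ and $\log$ contributions stay finite and can be bounded by their monotone endpoint values.
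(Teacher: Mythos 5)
Your proof is correct: the derivative of \eqref{formulah} is computed accurately (including the identity $\frac{d}{d\varphi}\log\bigl(2\cos\frac{\varphi}{2}\bigr)=-\frac12\tan\frac{\varphi}{2}$), the monotonicity claims for $\tan\frac{\varphi}{2}$ and $\log\bigl(2\cos\frac{\varphi}{2}\bigr)$ on $[\frac13,3]$ hold since $\frac{\varphi}{2}\in[\frac16,\frac32]\subset(0,\frac{\pi}{2})$, and the numerical endpoint values ($\tan\frac32\approx14.10$, $\bigl|\log(2\cos\frac32)\bigr|\approx1.956$) are right, giving a total below $16$. However, your route differs from the paper's in execution: the paper never differentiates the real trigonometric formula. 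Instead it writes $h(\varphi)=u(0)+\Re f(-e^{i\varphi})$, uses $\frac{d}{d\varphi}=iz\frac{d}{dz}$ with $z=e^{i\varphi}$ to get $h'(\varphi)=\Re\, i\bigl(\frac{2\log(1+z)}{z^2}-\frac{1}{z(1+z)}-\frac{1}{z}\bigr)$, and then bounds moduli: $|h'|\le 2|\log(1+z)|+\frac{1}{|1+z|}+1<11+8+1=20$, using that $|1+e^{i\varphi}|=2\cos\frac{\varphi}{2}$ is smallest at $\varphi=3$ on the interval. The two arguments are close cousins --- your $\frac12\tan\frac32\approx7.05$ corresponds to the paper's $\frac{1}{|1+e^{3i}|}\approx7.07$, and your $\log\bigl(2\cos\frac{\varphi}{2}\bigr)$ is exactly $\log|1+z|$ --- but the complex formulation buys a shorter computation (three terms instead of five, no product rule on trigonometric factors), while your real-variable version is more elementary and yields a slightly sharper constant ($16$ versus $20$). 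Both rest on the same essential observation you identify: the only singularity is at $\varphi=\pi$, and restricting to $[\frac13,3]$ keeps $2\cos\frac{\varphi}{2}=|1+z|$ bounded away from zero.
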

\begin{proof}
We have $h(\varphi)= u(0)-u(\varphi)=u(0)+\Re
f(-e^{i\varphi})$. If we put $z=e^{i\varphi}$, then
$\frac{d}{d\varphi}=iz\frac{d}{dz}$. Hence
\begin{multline*}
h'(\varphi)=\Re
\Bigl\{iz\frac{d}{dz}\Bigl(-\frac{\log(1+z)}{z^2}+\frac{1}{z}-\frac{1}{2}\Bigr)\Bigr\}
=\\
=\Re\,
i\Bigl(\frac{2\log(1+z)}{z^2}-\frac{1}{z(1+z)}-\frac{1}{z}\Bigr).
\end{multline*}

It follows that for $\frac13\le\varphi\le3$ we have
\begin{multline}
|h'(\varphi)|\le 2|\log(1+z)|+\frac{1}{|1+z|}+1\le\\
\le
2(\bigl|\,\log|1+e^{3i}|\bigr|+\pi\,)
+\frac{1}{|1+e^{3i}|}+1<11+8+1=20.
\end{multline}
\end{proof}

Now applying the MSP (repeatedly) on the interval $[1/3, 3]$ we find
($\,$in 4163 steps$\,$) that indeed $h(\varphi)>0$ on this interval. The
procedure can be speeded up considerably by introducing a more
flexible  $M = M(t)$.
\begin{figure}[H]
  \includegraphics[width=\hsize]{program2.eps}\\
\end{figure}
\noindent The other cases $r=\frac89$ and $r=0.883$ may be dealt with in a
similar manner.

Note: The above program is only an indication, for a complete
proof we must study the errors in the computations.  In the
computer all numbers are dyadic. So, what we need is a sequence
of dyadic numbers  $b=t_1>t_2>\cdots >t_m$   (without loss of
generality we may assume that  $b$  is dyadic) such that
$t_{k+1}>t_k-h(t_k)/M$ , for   $k=1$,   $2$,  \dots    $m-1$, with
$h(t_k)>0$   for all  $1\le k \le m$  and such that  $t_m < a$. In
our case a  more careful program will reveal that in the same
number of steps ($4163$) we get a   $t_m<1/3$, so that essentially
the above computation is correct.

\section{Once again the case $r=1$: Our Eulerian\\ approach.}

We will now show that $h(\varphi)$ as defined in \eqref{defh} is
strictly convex for $0<\varphi<\pi$.

Since $h'(0)=0$ this will solve our problem for $r=1$.

In view of the power series for $\log(1+z)$ our inequality may also
be written in the following interesting way
\begin{multline}\label{elementaryinequality}
u(\varphi)=\frac{\cos\varphi}{3}-\frac{\cos2\varphi}{4}+\frac{\cos3\varphi}{5}-+\cdots<
\frac{1}{3}-\frac{1}{4}+\frac{1}{5}-+\cdots=\log2-\frac12,\\
0<\varphi<\pi.
\end{multline}

Now we present a heuristic approach ---a technique often used
by Euler himself.

We write the left hand side of \eqref{elementaryinequality} as
\begin{equation}
u(\varphi)=\sum_{n=1}^\infty (-1)^{n-1}\frac{\cos(n\varphi)}{n+2}.
\end{equation}
Differentiating we find
\begin{multline*}
u''(\varphi)=\sum_{n=1}^\infty (-1)^n\frac{n^2\cos
n\varphi}{n+2}=\\= \sum_{n=1}^\infty (-1)^n\frac{n^2-4}{n+2}\cos
n\varphi+ \sum_{n=1}^\infty (-1)^n\frac{4}{n+2}\cos n\varphi=\\=
\sum_{n=1}^\infty (-1)^n(n-2)\cos n\varphi-4\sum_{n=1}^\infty
(-1)^n\frac{1}{n+2}\cos n\varphi.
\end{multline*}
Hence
\begin{multline*}
u''(\varphi)+4u(\varphi)=\sum_{n=1}^\infty (-1)^n(n-2)\cos
n\varphi=\\= \sum_{n=1}^\infty (-1)^n n\cos n\varphi+
2\sum_{n=1}^\infty (-1)^{n-1}\cos n\varphi
\end{multline*}
( see Hardy \cite[Section1.2~p.~2]{H} )
\begin{displaymath}
=-\frac12\frac{d}{d\varphi}\tan\frac{\varphi}{2}+2\times\frac12=
1-\frac12\frac{d}{d\varphi}\tan\frac{\varphi}{2}
\end{displaymath}
so that
\begin{equation}\label{difeqtan}
u''(\varphi)+4u(\varphi)=1-\frac12\frac{d}{d\varphi}\tan\frac{\varphi}{2}
\end{equation}
which may also be written as
\begin{equation}\label{diffeq}
u''(\varphi)+4u(\varphi)=1-\frac12\frac{1}{1+\cos \varphi}.
\end{equation}

Fully independent of the above Eulerian deduction, one may prove (
by direct verification ) that this differential equation for $u(t)$
is valid indeed.

\begin{proposition}
The function $u$ defined in \eqref{defu} satisfies the
differential equation \eqref{diffeq}.
\end{proposition}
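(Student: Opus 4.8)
The plan is to verify the differential equation \eqref{diffeq} by direct differentiation, starting from the closed form
\[
u(\varphi) = \cos(2\varphi)\log\left(2\cos\tfrac{\varphi}{2}\right) + \tfrac{\varphi}{2}\sin(2\varphi) - \cos\varphi + \tfrac12
\]
obtained just above \eqref{defh}. I would split $u$ into its four summands and apply the operator $\frac{d^2}{d\varphi^2}+4$ to each piece separately, since that operator interacts cleanly with this particular combination of terms.

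First I would record the single auxiliary derivative $\frac{d}{d\varphi}\log\left(2\cos\tfrac{\varphi}{2}\right) = -\tfrac12\tan\tfrac{\varphi}{2}$, which governs the only non-elementary part of the computation. Differentiating the leading term twice produces a summand $-4\cos(2\varphi)\log\left(2\cos\tfrac{\varphi}{2}\right)$ that is cancelled exactly by the corresponding part of $4u$; this is the key structural simplification, and it explains why the coefficient $4$ (equivalently, the factor $\cos 2\varphi$ in $u$) is the right one. After this cancellation the logarithmic term contributes only $2\sin(2\varphi)\tan\tfrac{\varphi}{2} - \tfrac14\cos(2\varphi)\sec^2\tfrac{\varphi}{2}$. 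The remaining summands are routine: applying $\frac{d^2}{d\varphi^2}+4$ to $\tfrac{\varphi}{2}\sin(2\varphi)$ annihilates the $\varphi$-linear factor and leaves $2\cos(2\varphi)$, while $-\cos\varphi$ contributes $-3\cos\varphi$ and the constant $\tfrac12$ contributes $2$. Summing, I expect
\[
u''(\varphi)+4u(\varphi) = 2\sin(2\varphi)\tan\tfrac{\varphi}{2} - \tfrac14\cos(2\varphi)\sec^2\tfrac{\varphi}{2} + 2\cos(2\varphi) - 3\cos\varphi + 2.
\]

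The main obstacle — really the only place where care is needed — is the final trigonometric simplification of this expression to the right-hand side $1 - \frac{1}{2(1+\cos\varphi)}$ of \eqref{diffeq}. Here I would use the half-angle identity $1+\cos\varphi = 2\cos^2\tfrac{\varphi}{2}$, so that $\sec^2\tfrac{\varphi}{2} = 2/(1+\cos\varphi)$ and the target becomes $1 - \tfrac14\sec^2\tfrac{\varphi}{2}$, together with $\tan\tfrac{\varphi}{2} = \sin\varphi/(1+\cos\varphi)$ and the double-angle formulas for $\sin 2\varphi$ and $\cos 2\varphi$. Rewriting $2\sin(2\varphi)\tan\tfrac{\varphi}{2} = 4\cos\varphi(1-\cos\varphi)$ and placing the $\sec^2$ term over the common denominator $2(1+\cos\varphi)$, the polynomial-in-$\cos\varphi$ parts collapse to a single $\cos\varphi$, leaving $\cos\varphi - \frac{2\cos^2\varphi-1}{2(1+\cos\varphi)}$. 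Clearing the denominator $2(1+\cos\varphi)$ then reduces the desired identity to the trivial $2\cos\varphi + 1 = 2\cos\varphi + 1$, which completes the verification.
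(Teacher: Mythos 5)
Your proof is correct, but it takes a genuinely different route from the paper's. You start from the real closed form $u(\varphi)=\cos(2\varphi)\log\bigl(2\cos\tfrac{\varphi}{2}\bigr)+\tfrac{\varphi}{2}\sin(2\varphi)-\cos\varphi+\tfrac12$ (derived in the paper just before \eqref{defh}) and apply $\tfrac{d^2}{d\varphi^2}+4$ term by term; each of your intermediate claims checks out: the logarithmic term contributes $2\sin(2\varphi)\tan\tfrac{\varphi}{2}-\tfrac14\cos(2\varphi)\sec^2\tfrac{\varphi}{2}$ after the $-4\cos(2\varphi)\log\bigl(2\cos\tfrac{\varphi}{2}\bigr)$ cancellation, the term $\tfrac{\varphi}{2}\sin(2\varphi)$ yields $2\cos(2\varphi)$, and the final reduction of $\cos\varphi-\tfrac{2\cos^2\varphi-1}{2(1+\cos\varphi)}$ to $1-\tfrac{1}{2(1+\cos\varphi)}$ does indeed collapse to the identity $2\cos\varphi+1=2\cos\varphi+1$. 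The paper instead stays in the complex domain: it writes $u(\varphi)=\Re\bigl(\tfrac{\log(1+z)}{z^2}-\tfrac1z+\tfrac12\bigr)$ with $z=e^{i\varphi}$, uses $\tfrac{d}{d\varphi}=iz\tfrac{d}{dz}$ to compute $u'$ and $u''$ as real parts of rational expressions in $z$ and $\log(1+z)$, obtains $u''+4u=\Re\tfrac{z(1+2z)}{(1+z)^2}$, and only then takes real parts via half-angle factorizations. The complex route is structurally cleaner---the cancellation of the logarithmic terms that you highlight as the key simplification happens automatically when forming $u''+4u$, and the trigonometric endgame is shorter---while your route is more elementary, avoiding complex differentiation entirely, at the cost of a longer real-variable computation and a dependence on the closed trigonometric form of $u$ established earlier in the paper. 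Both constitute complete verifications of \eqref{diffeq}.
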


\begin{proof}
Since $u(\varphi)$ is even we have
\begin{equation}
u(\varphi)=\Re\bigl(e^{-2i\varphi}\log(1+e^{i\varphi})-e^{-i\varphi}+\medio\bigr)=
\Re\Bigl(\frac{\log(1+z)}{z^2}-\frac{1}{z}+\frac12\Bigr)
\end{equation}
where $z=e^{i\varphi}$.  Then we have
$\frac{d}{d\varphi}=iz\frac{d}{dz}$. In this way we easily get
\begin{align*}
u'(\varphi)&=\Re\Bigl(\frac{i}{z}+\frac{i}{z(1+z)}-\frac{2i\log(1+z)}{z^2}\Bigr),\\
u''(\varphi)&=\Re\Bigl(\frac{1}{z}+\frac{1}{(1+z)^2}+\frac{3}{z(1+z)}-
\frac{4\log(1+z)}{z^2}\Bigr)
\end{align*}
so that
\begin{equation}
u''(\varphi)+4u(\varphi)=\Re\Bigl(\frac{z(1+2z)}{(1+z)^2}\Bigr).
\end{equation}
One may verify that
\begin{multline*}
\frac{z(1+2z)}{(1+z)^2}=\frac{z}{1+z}+\Bigl(\frac{z}{1+z}\Bigr)^2=
\frac{e^{i\varphi/2}}{e^{i\varphi/2}+e^{-i\varphi/2}}+\Bigl(\frac{e^{i\varphi/2}}{
e^{i\varphi/2}+e^{-i\varphi/2}}\Bigr)^2=\\=\frac{e^{i\varphi/2}}{2\cos\varphi/2}+
\frac{e^{i\varphi}}{4\cos^2\varphi/2}.
\end{multline*}
Taking real parts we get
\begin{equation}
u''(\varphi)+4u(\varphi)=\frac{1}{2}+\frac{\cos\varphi}{4\cos^2\varphi/2}=
\frac12+\frac{\cos\varphi}{2(1+\cos\varphi)}=1-\frac{1}{2(1+\cos\varphi)}.
\end{equation}
We  also have \eqref{difeqtan}. In fact
\begin{displaymath}
1-\frac12\frac{d}{d\varphi}\tan\frac{\varphi}{2}=1-\frac14\frac{1}{\cos^2\varphi/2}=1-\frac{1}{2(1+\cos\varphi)}
\end{displaymath}
\end{proof}

\begin{proposition}
The function $h$ may be represented by a power series
\begin{equation}\label{hseries}
h(\varphi)=\sum_{k=1}^\infty \frac{d_k}{(2k)!}(2\varphi)^{2k}
\end{equation}
where for $k\ge1$
\begin{equation}\label{formulad}
d_k=(-1)^k \frac{3}{4}-(-1)^k  \log2+(-1)^{k+1}
\sum_{j=1}^k\Bigl(1-\frac{1}{2^{2j}}\Bigr)\frac{B_{2j}}{2j}.
\end{equation}
\end{proposition}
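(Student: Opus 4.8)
The plan is to avoid expanding the closed form \eqref{formulah} directly: the product $\cos2\varphi\cdot\log(2\cos(\varphi/2))$ would produce a convolution (Cauchy product of $\cos 2\varphi$ against the Bernoulli series of $\log\cos(\varphi/2)$) that does not visibly collapse into the clean partial sum appearing in \eqref{formulad}. Instead I would route everything through the differential equation \eqref{diffeq} just established. Since $h=u(0)-u$ with $u(0)=\log2-\medio$, we have $h''=-u''$, so subtracting \eqref{diffeq} gives
\[
h''(\varphi)+4h(\varphi)=4u(0)-\Bigl(1-\frac{1}{2(1+\cos\varphi)}\Bigr)=4\log2-3+\frac14\sec^2\frac{\varphi}{2}.
\]
Because $\cos(\varphi/2)$ has no zero on $(-\pi,\pi)$, both sides are real-analytic there, and $h$, being even with $h(0)=h'(0)=0$, has an expansion $h(\varphi)=\sum_{k\ge1}a_k\varphi^{2k}$ convergent for $|\varphi|<\pi$. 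I would then fix the $a_k$ by comparing coefficients.

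First I would expand the right-hand side. Using $\frac{d}{d\varphi}\tan(\varphi/2)=\medio\sec^2(\varphi/2)$ together with the classical Bernoulli-number series for the tangent, one obtains
\[
\frac14\sec^2\frac{\varphi}{2}=\sum_{k=0}^\infty\frac{(-1)^k(2^{2k+2}-1)(2k+1)B_{2k+2}}{(2k+2)!}\varphi^{2k}.
\]
Inserting $h=\sum a_k\varphi^{2k}$ and matching the coefficient of $\varphi^{2k}$ then yields the initial relation $2a_1=4\log2-\tfrac{11}{4}$ together with, for $k\ge1$, the two-term recurrence
\[
(2k+2)(2k+1)a_{k+1}+4a_k=\frac{(-1)^k(2^{2k+2}-1)(2k+1)B_{2k+2}}{(2k+2)!}.
\]
Since this determines the sequence $(a_k)$ uniquely, it suffices to verify that $a_k=4^k d_k/(2k)!$, with $d_k$ as in \eqref{formulad}, satisfies it.

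The one place where the specific shape of \eqref{formulad} enters is a telescoping identity: subtracting the definitions of $d_k$ and $d_{k+1}$, the $\tfrac34$ and $\log2$ terms cancel and the two Bernoulli sums differ by a single summand, giving
\[
d_{k+1}+d_k=(-1)^k\Bigl(1-\frac{1}{2^{2k+2}}\Bigr)\frac{B_{2k+2}}{2k+2}.
\]
Substituting $a_k=4^k d_k/(2k)!$ into the left side of the recurrence collapses it to $\frac{4^{k+1}}{(2k)!}(d_{k+1}+d_k)$; inserting the telescoped value and using $4^{k+1}(1-2^{-2k-2})=2^{2k+2}-1$ and $(2k+2)!=(2k+2)(2k+1)(2k)!$ reproduces the right side exactly. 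The case $k=0$ is the one-line check against $d_1=\log2-\tfrac{11}{16}$. The main obstacle here is conceptual rather than computational: recognizing that the problem should be handled via the ODE, after which the telescoping identity for $d_{k+1}+d_k$ is what makes the otherwise opaque Bernoulli sum collapse. The only genuinely delicate bookkeeping is lining up the normalizations ($4^k$ versus $(2\varphi)^{2k}$, and the $2^{2k+2}-1$ factor); I would also record that all term-by-term manipulations are legitimate on $|\varphi|<\pi$, where coefficient comparison for analytic functions is valid.
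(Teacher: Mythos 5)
Your proposal is correct and follows essentially the same route as the paper: the paper likewise derives the relation $h''+4h=4u(0)-1+\frac12\frac{d}{d\varphi}\tan\frac{\varphi}{2}$ from the differential equation \eqref{difeqtan}, expands $\frac12\frac{d}{d\varphi}\tan\frac{\varphi}{2}$ via the Bernoulli series, matches coefficients to obtain $4d_1=4\log2-\frac{11}{4}$ and the recurrence $d_{k+1}=-d_k+(-1)^k(1-2^{-2k-2})\frac{B_{2k+2}}{2k+2}$, and then confirms \eqref{formulad} by induction. Your ``verify the candidate and invoke uniqueness of the solution of the recurrence'' step and your telescoping identity for $d_{k+1}+d_k$ are just a repackaging of that induction, and your normalization $a_k=4^kd_k/(2k)!$ is an equivalent bookkeeping choice.
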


\begin{proof}
By \eqref{formulah} we know that $h$ is analytic for
$|\varphi|<\pi$, so that  \eqref{hseries} is valid for
$|\varphi|<\pi$. To determine the coefficients, observe that,
because $h(\varphi)=u(0)-u(\varphi)$ by \eqref{difeqtan}, we
have
\begin{equation}
h''(\varphi)+4h(\varphi)=4u(0)-1+\frac12\frac{d}{d\varphi}\tan\frac{\varphi}{2}
\end{equation}
so that
\begin{multline*}
4\sum_{k=1}^\infty2k(2k-1)\frac{d_k}{(2k)!}(2\varphi)^{2k-2}
+4\sum_{k=1}^\infty
\frac{d_k}{(2k)!}(2\varphi)^{2k}=\\
=4\log2-3+\sum_{k=1}^\infty (-1)^{k-1}
\frac{(2k-1)(2^{2k}-1)B_{2k}}{(2k)!}\varphi^{2k-2}.
\end{multline*}
Equating  coefficients of equal powers of $\varphi$ we get
\begin{equation}
4d_1=4\log2-3+\frac{3}{2}B_2=4\log2 - \frac{11}{4}
\end{equation}
and for $k\ge1$
\begin{equation}
4(2k+2)(2k+1)\frac{2^{2k}d_{k+1}}{(2k+2)!}+4\frac{2^{2k}d_k}{(2k)!}=(-1)^{k}
\frac{(2k+1)(2^{2k+2}-1)B_{2k+2}}{(2k+2)!}
\end{equation}
which simplifies to
\begin{equation}
d_{k+1}=-d_k+(-1)^k (1-2^{-2k-2})\frac{B_{2k+2}}{2k+2}, \qquad
k\ge2.
\end{equation}
Now we can prove formula \eqref{formulad} by induction. First,
for $k=1$, \eqref{formulad} gives the correct value of $d_1$.
Assuming that \eqref{formulad} is true for $k$ we get
\begin{align*}
d_{k+1}&=-d_k+(-1)^k (1-2^{-2k-2})\frac{B_{2k+2}}{2k+2}= \\&=-(-1)^k
\frac{3}{4}+(-1)^k  \log2-(-1)^{k+1}
\sum_{j=1}^k\Bigl(1-\frac{1}{2^{2j}}\Bigr)\frac{B_{2j}}{2j}+\\
&\qquad+(-1)^k (1-2^{-2k-2})\frac{B_{2k+2}}{2k+2}\\
&=(-1)^{k+1}\frac{3}{4}-(-1)^{k+1}  \log2+(-1)^{k+2}
\sum_{j=1}^{k+1}\Bigl(1-\frac{1}{2^{2j}}\Bigr)\frac{B_{2j}}{2j}
\end{align*}
so that \eqref{formulad} is also true for $d_{k+1}$.
\end{proof}

\begin{proposition}
All coefficients $d_k$ in the Taylor expansion \eqref{hseries} are
strictly positive.
\end{proposition}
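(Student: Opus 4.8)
The plan is to deduce the positivity of every $d_k$ from the recurrence already established, by converting it into a statement about the monotonicity of a manifestly positive auxiliary sequence.

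First I would introduce the increments
\[
a_j:=(-1)^{j-1}\Bigl(1-\frac{1}{2^{2j}}\Bigr)\frac{B_{2j}}{2j},\qquad j\ge1,
\]
and note that $a_j>0$ for every $j$, since $(-1)^{j-1}B_{2j}=|B_{2j}|>0$ by the standard sign property of the Bernoulli numbers, while $1-2^{-2j}>0$. With this notation the recurrence $d_{k+1}=-d_k+(-1)^k(1-2^{-2k-2})\frac{B_{2k+2}}{2k+2}$ proved above reads simply
\[
d_{k+1}+d_k=a_{k+1},\qquad k\ge1.
\]

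The heart of the matter is then a single subtraction. Eliminating $d_k$ between $d_{k+1}=a_{k+1}-d_k$ and $d_k=a_k-d_{k-1}$ gives the identity
\[
d_{k+1}-d_{k-1}=a_{k+1}-a_k,\qquad k\ge2.
\]
Hence, as soon as $(a_k)$ is increasing from some index on, we get $d_{k+1}>d_{k-1}$, so that the two parity subsequences $d_2<d_4<d_6<\cdots$ and $d_3<d_5<d_7<\cdots$ are both strictly increasing, and positivity of all $d_k$ reduces to positivity of the three seeds $d_1,d_2,d_3$.

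Concretely I would show $a_{k+1}>a_k$ for $k\ge3$. Using $|B_{2k}|=2(2k)!\,\zeta(2k)/(2\pi)^{2k}$ one computes
\[
\frac{a_{k+1}}{a_k}=\frac{1-2^{-2k-2}}{1-2^{-2k}}\cdot\frac{2k(2k+1)}{4\pi^2}\cdot\frac{\zeta(2k+2)}{\zeta(2k)}.
\]
Bounding the first factor below by $1$ and the ratio $\zeta(2k+2)/\zeta(2k)$ below by $1/\zeta(6)$ (legitimate for $k\ge3$, since $\zeta(2k+2)>1$ and $\zeta(2k)\le\zeta(6)$) yields $a_{k+1}/a_k>2k(2k+1)/(4\pi^2\zeta(6))\ge 42/(4\pi^2\zeta(6))>1$. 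For the seeds, $B_4=-\tfrac1{30}$ and $B_6=\tfrac1{42}$ give $a_2=\tfrac1{128}$, $a_3=\tfrac1{256}$, so that $d_1=\log2-\tfrac{11}{16}$, $d_2=\tfrac{89}{128}-\log2$, $d_3=\log2-\tfrac{177}{256}$, all strictly positive because $\tfrac{11}{16}<\tfrac{177}{256}<\log2<\tfrac{89}{128}$. Feeding these into the increasing parity subsequences gives $d_k>0$ for all $k$.

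The main obstacle is precisely the monotonicity estimate for $(a_k)$: the sequence actually \emph{decreases} over its first terms ($a_1>a_2>a_3$) and only turns increasing at $k=3$, so the estimate must be glued to the explicit treatment of the base cases. Those base cases are genuinely delicate, since $d_1\approx0.0057$ is tiny and the inequalities $d_1,d_2,d_3>0$ hinge on sharp numerical bounds for $\log2$.
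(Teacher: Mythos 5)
Your proof is correct, and it takes a genuinely different route from the paper's. The paper works directly with the closed form \eqref{formulad}: after converting Bernoulli numbers to zeta values via $\zeta(2n)=(-1)^{n+1}\tfrac{(2\pi)^{2n}B_{2n}}{2\cdot(2n)!}$, it bounds $d_k$ from below by the $k$-th term of the alternating sum minus \emph{all} preceding terms minus $\bigl(\tfrac34-\log2\bigr)$, shows that this last term dominates once $k\ge33$, and then disposes of the remaining $32$ cases by hand, each reducing to $\tfrac{177}{256}<\log2<\tfrac{89}{128}$. You instead read the recurrence as the two-term relation $d_{k+1}+d_k=a_{k+1}$, subtract consecutive instances to get $d_{k+1}-d_{k-1}=a_{k+1}-a_k$, and prove that the increments $a_k$ increase for $k\ge3$ by an easy ratio estimate; the same Euler formula enters, but only to compare \emph{consecutive} $a_k$, never to dominate a whole sum. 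This buys quite a bit: you obtain the stronger structural conclusion that the parity subsequences $d_2<d_4<d_6<\cdots$ and $d_3<d_5<d_7<\cdots$ are increasing, and the finite verification shrinks from $32$ cases to the three seeds $d_1=\log2-\tfrac{11}{16}$, $d_2=\tfrac{89}{128}-\log2$, $d_3=\log2-\tfrac{177}{256}$ --- which involve exactly the extremal fractions the paper also must confront ($\max r=\tfrac{177}{256}$, $\min s=\tfrac{89}{128}$), so neither argument needs sharper numerics than the other. One small point to note: the paper states the simplified recurrence with the restriction $k\ge2$, but as its derivation (valid for $k\ge1$) and its own induction starting at $k=1$ show, it holds for all $k\ge1$, which is what you use to compute $d_2$ and $d_3$ from $d_1$; alternatively those two values follow directly from \eqref{formulad}, so your base cases stand in any event.
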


\begin{proof}
Recall the well known formula \cite{E}
\begin{equation}
\zeta(2n)=(-1)^{n+1}\frac{(2\pi)^{2n} B_{2n}}{2\cdot (2n)!}
\end{equation}
so that  we may write \eqref{formulad} as
\begin{equation}
d_k=(-1)^{k+1}
\sum_{j=1}^k(-1)^{j+1}\Bigl(1-\frac{1}{2^{2j}}\Bigr)\frac{2\cdot(2j-1)!\zeta(2j)}{(2\pi)^{2j}}+(-1)^k
\frac{3}{4}-(-1)^k  \log2.
\end{equation}
Therefore, since $\log2<3/4$,
\begin{multline*}
d_k\ge\\
\Bigl(1-\frac{1}{2^{2k}}\Bigr)\frac{2\cdot(2k-1)!\zeta(2k)}{(2\pi)^{2k}}-
\sum_{j=1}^{k-1}\Bigl(1-\frac{1}{2^{2j}}\Bigr)\frac{2\cdot(2j-1)!\zeta(2j)}{(2\pi)^{2j}}-
\Bigl(\frac{3}{4}-\log2\Bigr).
\end{multline*}
So, we only need to prove that
\begin{equation}\label{needed}
\sum_{j=1}^{k-1}\Bigl(1-\frac{1}{2^{2j}}\Bigr)\frac{2\cdot(2j-1)!\zeta(2j)}{(2\pi)^{2j}}+
\Bigl(\frac{3}{4}-\log2\Bigr)<\Bigl(1-\frac{1}{2^{2k}}\Bigr)
\frac{2\cdot(2k-1)!\zeta(2k)}{(2\pi)^{2k}}.
\end{equation}
But we have
\begin{equation}\label{parcial}
\sum_{j=1}^{k-1}\Bigl(1-\frac{1}{2^{2j}}\Bigr)\frac{2\cdot(2j-1)!\zeta(2j)}{(2\pi)^{2j}}
\le 2\zeta(2)\sum_{j=1}^{k-1}\frac{(2j-1)!}{(2\pi)^{2j}}.
\end{equation}
For $k\ge8$ the last term of the sum in the right hand side of
\eqref{parcial} is the greatest,  so that
\begin{displaymath}
2\zeta(2)\sum_{j=1}^{k-1}\frac{(2j-1)!}{(2\pi)^{2j}}\le 2
(k-1)\zeta(2)\frac{(2k-3)!}{(2\pi)^{2k-2}}=\zeta(2)\frac{(2k-2)!}{(2\pi)^{2k-2}},\qquad
k\ge8.
\end{displaymath}
Also, it is easy to check that
\begin{equation}
\zeta(2)\frac{(2k-2)!}{(2\pi)^{2k-2}}>\Bigl(\frac34-\log2\Bigr),\qquad
(k\ge8).
\end{equation}
It follows that for $k\ge8$  inequality \eqref{needed} would be a
consequence of
\begin{equation}
2\zeta(2)\frac{(2k-2)!}{(2\pi)^{2k-2}}<
\Bigl(1-\frac{1}{2^{2k}}\Bigr)\frac{2\cdot(2k-1)!\zeta(2k)}{(2\pi)^{2k}}.
\end{equation}
This follows from the inequality
\begin{equation}
2\zeta(2)\frac{(2k-2)!}{(2\pi)^{2k-2}}<\frac{2^{16}-1}{2^{16}}\cdot\frac{2\cdot(2k-1)!}{(2\pi)^{2k}}.
\end{equation}
So, we only need to show that
\begin{equation}
\frac{2^{16}}{2^{16}-1}(2\pi)^2\zeta(2)< (2k-1)
\end{equation}
which is true for $k\ge33$.

It remains  to prove that $d_k>0$ for $1\le k\le 32$. Each of the
numbers $d_k$ is of the form $\frac{a}{b}\pm \log 2$. Each
inequality $d_k>0$ can be written as  $\log 2>r$ or $\log 2<s$ where
$r$ and $s$ are certain rational numbers. It is easy to see that
$\max r = \frac{177}{256}$ and $\min s = \frac{89}{128}$ and we
check that in fact
\begin{displaymath}
0.691406\approx\frac{177}{256}<\log2\approx0.693147<\frac{89}{128}\approx0.695313
\end{displaymath}
finishing the proof that $d_k>0$ for all $k\ge1$.
\end{proof}

\section{The general case.}

For  $0<r<1$ we want to prove that $-\Re f(re^{i\varphi})\le -\Re
f(-r)$ for $-\pi<\varphi\le \pi$. As before we change variables
putting $\pi-\varphi$ instead of $\varphi$. So, we want to prove
that $-\Re f(-re^{-i\varphi})\le -\Re f(-r)$.

Because $-\Re f(-re^{-i\varphi})=-\Re f(-re^{i\varphi})$ we
will show that
\begin{equation}
-\Re f(-re^{i\varphi})\le-\Re f(-r),\qquad -\pi<\varphi\le\pi.
\end{equation}
For $|z|<1$ we  define
\begin{multline}
U(z)=U(r e^{i\varphi}):=\Re f(-re^{i\varphi})-\Re f(-1)=\\
=-\Re
f(-1)-\Re\Bigl(
\frac{\log(1+z)}{z^2}-\frac{1}{z}+\frac{1}{2}\Bigr),\qquad
z=re^{i\varphi}.
\end{multline}
Then $U$ is a harmonic function on the unit disc $\Delta:=\{z:
|z|<1\}$. In fact it extends to a continuous function on
$\Delta\smallsetminus\{-1\}$. This extension will also be denoted by
$U$. The   values of $U(e^{i\varphi})$  at  the boundary of $\Delta$
coincide with those of  $h(\varphi)$ as defined by \eqref{defh}. Our
problem is to show that for $0<r<1$ and $-\pi<\varphi<\pi$ we have
$U(r e^{i\varphi})\ge U(r)$.

Because $h(\varphi)$ is an $\mathcal{L}^1(0,2\pi)$ function  we
have
\begin{equation}\label{Poisson}
U(re^{i\varphi})=\frac{1}{2\pi}\int_0^{2\pi}
h(t)P_r(\varphi-t)\,dt,\qquad 0<r<1,\quad -\pi<\varphi<\pi
\end{equation}
where $P_r(t):=\frac{1-r^2}{1+r^2-2r\cos t}$ is the Poisson kernel.

Our claim will now follow from some ( slightly adapted )
theorems on rearrangements as described in the book by
Hardy-Littlewood-Pólya  on inequalities \cite[Theorems 368 and
378]{HLP}. Since the theorems there  do not apply directly to
our situation we prove the following:
\begin{proposition}\label{PropReargm}
Let $F$ and $G$ be   measurable positive periodic functions on $\R$,
with period $2\pi$. We assume that $F$ and $G$ are even, and that
$F$ is non decreasing and $G$ non increasing on $(0,\pi)$.

If $T\colon(0,2\pi]\to(0,2\pi]$ is a Borel measurable function that
preserves Lebesgue measure, i.~e. for any Borel $B\subset(0, 2\pi]$
we have $|T^{-1}(B)|=|B|$, then
\begin{equation}\label{rearrangementinequality}
\int_0^{2\pi} F(t) G(t)\,dt\le \int_0^{2\pi} F(t) G(T(t))\,dt.
\end{equation}
\end{proposition}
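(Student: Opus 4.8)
The plan is to reduce \eqref{rearrangementinequality} to a statement about the super-level sets of $F$ and $G$ via the layer-cake (distribution function) representation, and then to exploit the fact that opposite monotonicity makes these level sets antipodal arcs on the circle, which forces a minimal-overlap property. First I would write, using that $F$ and $G$ are positive,
\[
F(t)=\int_0^\infty \mathbf 1[F(t)>\lambda]\,d\lambda,\qquad
G(s)=\int_0^\infty \mathbf 1[G(s)>\mu]\,d\mu ,
\]
and set $A_\lambda:=\{t\in(0,2\pi]:F(t)>\lambda\}$ and $B_\mu:=\{t\in(0,2\pi]:G(t)>\mu\}$. Substituting these into both sides of \eqref{rearrangementinequality} and interchanging the order of integration (legitimate by Tonelli, as every integrand is non-negative) turns the two integrals into
\[
\int_0^\infty\!\!\int_0^\infty |A_\lambda\cap B_\mu|\,d\lambda\,d\mu
\qquad\text{and}\qquad
\int_0^\infty\!\!\int_0^\infty |A_\lambda\cap T^{-1}(B_\mu)|\,d\lambda\,d\mu ,
\]
since $\int_0^{2\pi}\mathbf 1[t\in A_\lambda]\,\mathbf 1[T(t)\in B_\mu]\,dt=|A_\lambda\cap T^{-1}(B_\mu)|$. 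Thus it suffices to prove, for each fixed $\lambda,\mu>0$, the pointwise inequality $|A_\lambda\cap B_\mu|\le |A_\lambda\cap T^{-1}(B_\mu)|$.

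Next I would pin down the geometry of the level sets. Because $F$ is even and non-decreasing on $(0,\pi)$ it attains its minimum at $0$ and its maximum at $\pi$, so $A_\lambda$ is, up to a null set, a symmetric open interval about $\pi$, say $A_\lambda=(\pi-a,\pi+a)$ with $0\le a\le\pi$. Because $G$ is even and non-increasing on $(0,\pi)$ it attains its maximum at $0$ and minimum at $\pi$, so $B_\mu=(0,b)\cup(2\pi-b,2\pi]$ with $0\le b\le\pi$; viewed on the circle $\R/2\pi\Z$ this is exactly the arc antipodal to $A_\lambda$. Writing $C:=T^{-1}(B_\mu)$, the hypothesis that $T$ preserves Lebesgue measure gives $|C|=|B_\mu|=2b$.

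The crux is then a short two-set estimate. For any measurable $A,C\subset(0,2\pi]$ the union bound $|A\cup C|\le 2\pi$ yields $|A\cap C|\ge\max(0,|A|+|C|-2\pi)$. It remains to check that the antipodal pair $A_\lambda,B_\mu$ attains this lower bound: a direct computation of the overlap of the arcs $(\pi-a,\pi+a)$ and $(0,b)\cup(2\pi-b,2\pi]$ gives $|A_\lambda\cap B_\mu|=0$ when $a+b\le\pi$ and $|A_\lambda\cap B_\mu|=2a+2b-2\pi$ when $a+b>\pi$, that is, exactly $\max(0,|A_\lambda|+|B_\mu|-2\pi)$ in both cases. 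Consequently
\[
|A_\lambda\cap B_\mu|=\max\bigl(0,|A_\lambda|+|B_\mu|-2\pi\bigr)
=\max\bigl(0,|A_\lambda|+|C|-2\pi\bigr)\le|A_\lambda\cap C| ,
\]
which is the desired pointwise inequality. Integrating in $\lambda$ and $\mu$ and comparing the two double integrals above then gives \eqref{rearrangementinequality}.

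The only genuine content is this last minimal-overlap lemma together with the identification of the level sets; the measure-theoretic bookkeeping (the null sets at the interval endpoints, the applicability of Tonelli) is routine. I expect the main obstacle to be one of framing rather than of estimation: once one recognizes that opposite monotonicity makes the super-level sets antipodal arcs, the minimization of their overlap is immediate from the union bound, so the real work lies in setting up the layer-cake reduction cleanly, after which the inequality follows with no delicate analysis.
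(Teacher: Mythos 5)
Your proof is correct and takes essentially the same route as the paper's: both reduce to the case of indicator functions (you via the layer-cake formula and Tonelli, the paper via approximation by increasing step functions and monotone convergence), identify the relevant sets as antipodal arcs centered at $\pi$ and at $0$, and conclude with the identical key estimate $|A\cap C|\ge\max\bigl(0,|A|+|C|-2\pi\bigr)$ from the union bound, which the antipodal arcs attain with equality. The difference is purely one of measure-theoretic packaging, not of substance.
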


\begin{proof}
Consider first the case in which $F$ and $G$ only take the
values $0$ and $1$. Then, the hypotheses of the Proposition
imply  that $G$ is the characteristic function of an interval
$I$ with center at $0$ and $F$ the characteristic function of
an interval $J$ with center at $\pi$ ( considering the
functions $F$ and $G$ as defined on the circle ( group ) ).
Then
\begin{displaymath}
|I\cap J|=\int_0^{2\pi} F(t)G(t)\,dt\quad\text{and}\quad |I\cap M|=
\int_0^{2\pi} F(t)G(T(t))\,dt
\end{displaymath}
where $M=T^{-1}(J)$ is a measurable set of measure $|M|=|J|$. If
$I\cap J=\emptyset $ there is nothing to prove. In the other case we
will have
\begin{displaymath}
|I|+|J|-|I\cap J|=|I\cup J|=2\pi\quad\text{and}\quad |I|+|M|-|I\cap
M|=|I\cup M|\le2\pi
\end{displaymath}
and it follows that $|I\cup J|\le|I\cap M|$.

In the general case  $F$ and $G$  can be written as the suprema  of
increasing sequences of step functions of type $F=\lim F_r$, with
$F_r:=\sum_{k=1}^n a_k \chi_{J_k}$ and $G=\lim G_r$ with $G_r:=
\sum_{k=1}^m b_k\chi_{I_k}$, where $a_k\ge0$, $b_k\ge0$, the $J_k$
are intervals centered at $\pi$ and the $I_k$ intervals centered at
$0$.

Then the result for intervals implies
\begin{displaymath}
\int_0^{2\pi} F_r(t)G_r(t)\,dt\le \int_0^{2\pi}
F_r(t)G_r(T(t))\,dt.
\end{displaymath}
Applying the Monotone
Convergence Theorem we get \eqref{rearrangementinequality}.
\end{proof}

\begin{theorem}
For  $0<r<1$ and $0<\varphi<\pi$ we have $-\Re f(re^{i\varphi})<-\Re
f(-r)$.
\end{theorem}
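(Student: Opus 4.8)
The plan is to work with the harmonic extension $U$ and its Poisson representation \eqref{Poisson}, and to read the desired inequality $U(re^{i\varphi})\ge U(r)$ as an instance of the rearrangement inequality of Proposition~\ref{PropReargm}. Taking $\varphi=0$ in \eqref{Poisson} and using that the Poisson kernel is even gives $U(r)=\frac{1}{2\pi}\int_0^{2\pi}h(t)P_r(t)\,dt$, so that
\begin{equation*}
2\pi\bigl(U(re^{i\varphi})-U(r)\bigr)=\int_0^{2\pi}h(t)\,P_r(\varphi-t)\,dt-\int_0^{2\pi}h(t)\,P_r(t)\,dt .
\end{equation*}
Thus it suffices to show that replacing $P_r(t)$ by the reflected-and-rotated kernel $P_r(\varphi-t)$ does not decrease the integral against $h$.

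To invoke Proposition~\ref{PropReargm} I would set $F:=h$ and $G:=P_r$, both extended $2\pi$-periodically. The kernel $P_r(t)=\frac{1-r^2}{1+r^2-2r\cos t}$ is even, strictly positive, and strictly decreasing on $(0,\pi)$ because $\cos t$ decreases there; hence $G$ satisfies the hypotheses required of the non-increasing factor. For $F=h$ I would use the Taylor expansion \eqref{hseries}: since all coefficients $d_k$ are strictly positive, $h$ is even, $h(0)=0$, and $h$ is strictly positive and strictly increasing on $(0,\pi)$, so $F$ is non-negative and non-decreasing there, as required; the single zero at $t=0$ is harmless because the proof of Proposition~\ref{PropReargm} only uses $F\ge0$. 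Finally I would take $T(t):=(\varphi-t)\bmod 2\pi$ on $(0,2\pi]$; being the composition of the reflection $t\mapsto-t$ and the rotation by $\varphi$, it preserves Lebesgue measure, and by periodicity $G(T(t))=P_r(\varphi-t)$. Proposition~\ref{PropReargm} then yields $\int_0^{2\pi}h\,P_r\le\int_0^{2\pi}h\,(P_r\circ T)$, that is, $U(r)\le U(re^{i\varphi})$.

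The main obstacle is upgrading this to the \emph{strict} inequality claimed for $0<\varphi<\pi$, since Proposition~\ref{PropReargm} only delivers $\le$. I would obtain strictness by differentiating the Poisson integral. Writing $\Phi(\varphi):=U(re^{i\varphi})$, substituting $s=\varphi-t$, and using that $P_r'$ is odd, one gets
\begin{equation*}
\Phi'(\varphi)=\frac{1}{2\pi}\int_0^{\pi}\bigl[h(\varphi-s)-h(\varphi+s)\bigr]\,P_r'(s)\,ds .
\end{equation*}
Here $P_r'(s)<0$ on $(0,\pi)$, while a short angular-distance computation shows that for $\varphi,s\in(0,\pi)$ the point $\varphi-s$ is strictly nearer to $0$ on the circle than $\varphi+s$, so that $h(\varphi-s)<h(\varphi+s)$ (using that $h$ is even and strictly increasing in the angular distance from $0$). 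Hence the integrand is strictly positive on a set of positive measure and $\Phi'(\varphi)>0$ for $0<\varphi<\pi$. Since $\Phi$ is even, this shows $\Phi$ has a strict minimum at $\varphi=0$, giving $U(re^{i\varphi})>U(r)$, equivalently $-\Re f(re^{i\varphi})<-\Re f(-r)$, for $0<\varphi<\pi$. Alternatively one may keep the rearrangement as the sole engine and derive strictness from the strict monotonicity of $F$ and $G$ on $(0,\pi)$ together with the fact that $T$ genuinely rearranges mass when $0<\varphi<\pi$.
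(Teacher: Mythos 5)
Your argument is correct, and its first half coincides with the paper's: both apply Proposition~\ref{PropReargm} to the Poisson representation \eqref{Poisson} with $F=h$, $G=P_r$, and the measure-preserving map $t\mapsto(\varphi-t)\bmod 2\pi$, obtaining $U(r)\le U(re^{i\varphi})$. The genuine difference is in how strictness is obtained. The paper never differentiates: it constructs a second measure-preserving map $T$ which agrees with $t\mapsto\varphi-t$ outside two small intervals $I_a$, $I_b$ centered at $a=\varphi/2-\delta$ and $b=\varphi/2+\delta$, but reflects each of these intervals onto itself; Proposition~\ref{PropReargm} gives $\int h\,P_r\,dt\le\int h\,(P_r\circ T)\,dt$, and a direct computation of the deficit reduces it to
\begin{equation*}
D=\int_{-\varepsilon}^{\varepsilon}\bigl(h(b+t)-h(a+t)\bigr)\bigl(P_r(a-t)-P_r(b-t)\bigr)\,dt>0,
\end{equation*}
strictly positive by the strict monotonicity of $h$ and $P_r$ on $(0,\pi)$, which yields $\int h\,(P_r\circ T)\,dt<\int h(t)P_r(\varphi-t)\,dt$ and hence the strict inequality (in the paper's display \eqref{end} the leftmost integrand should be read as $h(t)P_r(t)$). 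You instead differentiate the Poisson integral in $\varphi$, fold the integral using the oddness of $P_r'$, and note that $h(\varphi-s)<h(\varphi+s)$ while $P_r'(s)<0$ for a.e.\ $s\in(0,\pi)$; your angular-distance check is sound, and the exceptional set where $\varphi+s\equiv\pi$ is null. Both routes rest on the same two monotonicity facts (all $d_k>0$, so $h$ is strictly increasing in the distance to $0$; $P_r$ strictly decreasing on $(0,\pi)$). Yours is arguably more efficient, since $\Phi'>0$ on $(0,\pi)$ by itself proves the theorem and makes your rearrangement step logically redundant; what it costs is a justification of differentiation under the integral sign, which you should state explicitly (routine here: $h\in\mathcal{L}^1(0,2\pi)$ and, for fixed $r<1$, $P_r$ is smooth with bounded derivatives). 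The paper's swap construction avoids any regularity issue and keeps Proposition~\ref{PropReargm} as the sole engine. Your closing one-sentence alternative (strictness because $T$ genuinely rearranges mass) is in spirit exactly the paper's argument, but as stated it is only a gesture; the interval-swap map is what makes it precise.
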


\begin{proof}
The inequality is equivalent to
\begin{displaymath}
\Re f(-r)-\Re f(-1) =U(r)   < U(r e^{i\varphi})= \Re
f(-re^{i\varphi})-\Re f(-1)
\end{displaymath}
We can apply Proposition \ref{PropReargm} to the representation
\eqref{Poisson}. In fact our $h(t)$ is even, positive  and non
decreasing on $(0,\pi)$, and the Poisson kernel
$P_r(t)=\frac{1-r^2}{1+r^2-2r\cos t}$ is even, positive and non
increasing on $(0,\pi)$. Also the translation $t\mapsto\varphi-t$ is
measure preserving on the circle. So Proposition \ref{PropReargm}
yields $U(r)\le U(re^{i\varphi})$.

To show that the inequality is strict for $0<\varphi<\pi$, we
consider a small $\delta>0$ such that
$0<a:=\varphi/2-\delta<\varphi/2<b:=\varphi/2+\delta<\pi$, and
also a small $\varepsilon >0$ such that
$0<a-\varepsilon<a+\varepsilon<\varphi/2<b-\varepsilon<b+\varepsilon<\pi$.
Consider the intervals $I_a:=[a-\varepsilon,a+\varepsilon]$ and
$I_b:=[b-\varepsilon,b+\varepsilon]$.  The transformation
$t\mapsto \varphi-t$ transforms $I_a$ into $I_b$ and $I_b$ into
$I_a$. Now consider the transformation $T$ such that
$T(t)=\varphi-t$ when $t\notin I_a\cup I_b$. For $t\in I_a$ we
define $T(t)=2a-t$ and for $t\in I_b$ we put $T(t)=2b-t$ ( $t$
and $2b-t$ are symmetrical with respect to $b$ ). It is clear
that $T$ conserves the measure of $(-\pi,\pi]$ ( considered as
the circle ).  We will prove that
\begin{equation}\label{end}
\int_{-\pi}^{\pi} h(t)P_r(\varphi-t)\,dt\le \int_{-\pi}^{\pi}
h(t)P_r(T(t))\,dt<\int_{-\pi}^{\pi}
h(t)P_r(\varphi-t)\,dt
\end{equation}
thereby concluding the proof.

The first inequality is simply a new application of Proposition
\ref{PropReargm}. We only need to confirm the second inequality
in \eqref{end}.  By definition $T(t)=\varphi-t$ except on
$I_a\cup I_b$ so that
\begin{multline}
D:=\int_{-\pi}^{\pi} h(t)P_r(\varphi-t)\,dt-\int_{-\pi}^{\pi}
h(t)P_r(T(t))\,dt=\\=\int_{I_a\cup I_b}h(t)P_r(\varphi-t)\,dt-
\int_{I_a\cup I_b} h(t)P_r(T(t))\,dt=
\int_{a-\varepsilon}^{a+\varepsilon}h(t)P_r(\varphi-t)\,dt+\\+
\int_{b-\varepsilon}^{b+\varepsilon}h(t)P_r(\varphi-t)\,dt-
\int_{a-\varepsilon}^{a+\varepsilon}h(t)P_r(T(t))\,dt-
\int_{b-\varepsilon}^{b+\varepsilon}h(t)P_r(T(t))\,dt.
\end{multline}
Now we change variables so that all integrals are taken over the
same interval $(-\varepsilon,\varepsilon)$. Observing that
$a=\varphi/2-\delta$ and $\varphi-a=b$
\begin{multline}
D=\int_{-\varepsilon}^{\varepsilon}h(a+t)P_r(b-t)\,dt+
\int_{-\varepsilon}^{\varepsilon}h(b+t)P_r(a-t)\,dt-\\-
\int_{-\varepsilon}^{\varepsilon}h(a+t)P_r(a-t)\,dt-
\int_{-\varepsilon}^{\varepsilon}h(b+t)P_r(b-t)\,dt
\end{multline}
we find that
\begin{equation}
D=\int_{-\varepsilon}^{\varepsilon}\bigl(h(b+t)-h(a+t)\bigr)\bigl(P_r(a-t)-P_r(b-t)
\bigr)\,dt.
\end{equation}
Here we always have   $0<a+t<b+t<\pi$, and $0<a-t<b-t<\pi$ so that
the integrand is strictly positive. We thus have $D>0$, completing
the proof.
\end{proof}
\bigskip

\textsc{Acknowledgement: } The authors would like to thank
Foster Dieckhoff ($\,$Kansas City, Missouri, USA$\,$) and Maarten van
Swaay ($\,$for\-mer\-ly at Kansas State University, Manhattan, Kansas,
USA$\,$) for their linguistic assistance in preparing this note,
and for their interest in the subject.

\end{document}